\newtheorem{theorem}{Theorem}[section]
\newtheorem{proposition}{Proposition}[section]
\def\derpar#1#2{\frac{\partial#1}{\partial#2}}
\newcommand{\ens}[1]{\mathbb{#1}}
\newcommand{\bb}{\hat \beta}
\def\R{\mathbb R}
\newcommand{\f}{\hat f}
\def\P{\mathcal P}
\def\proj{\mathcal P}
\def\D{\mathcal D}
\def\B{\mathcal B}
\def\poly{\mathbb P}
\def\f{\hat f}
\def\LL{\mathcal L}
\newcommand{\Q}{\mathcal{Q}}
\def\Ball{ {\cal B}}
\DeclareMathOperator{\Span}{span}
\def\be{\begin{equation}}
\def\ee{\end{equation}}
\def\bea{\begin{eqnarray}}
\def\eea{\end{eqnarray}}
\def\beas{\begin{eqnarray*}}
\def\eeas{\end{eqnarray*}}
\title{On the stability of equilibrium preserving spectral methods for the homogeneous Boltzmann equation}
\author{Lorenzo Pareschi\footnote{Department of Mathematics \& Computer Science, University of Ferrara, Via Machiavelli 30, Ferrara, 44121, Italy(\texttt{lorenzo.pareschi@unife.it}).} and Thomas Rey\footnote{Univ. Lille, CNRS, UMR 8524, Inria - Laboratoire Paul Painlevé F-59000 Lille, France (\texttt{thomas.rey@univ-lille.fr})}} 
\date{}
\begin{document}
\maketitle
\begin{abstract} Spectral methods, thanks to the high accuracy and the possibility to use fast algorithms, represent an effective way to approximate the Boltzmann collision operator. On the other hand, the loss of some local invariants leads to the wrong long time behavior. A way to overcome this drawback, without sacrificing spectral accuracy, has been proposed recently with the construction of equilibrium preserving spectral methods. Despite the ability to capture the steady state with arbitrary accuracy, the theoretical properties of the method have never been studied in details. In this paper, using the perturbation argument developed by Filbet and Mouhot for the homogeneous Boltzmann equation, we prove stability, convergence and spectrally accurate long time behavior of the equilibrium preserving approach.    \\[1em]
    \textsc{Keywords:} Boltzmann equation, Fourier-Galerkin spectral method, steady-state preserving, micro-macro decomposition, local Maxwellian, stability\\[.5em]
    \textsc{2010 Mathematics Subject Classification:} 
     65N35, 
     76P05, 
  82C40 
  
\end{abstract}

\section{Introduction}
Spectral methods for the Boltzmann equation are nowadays rather popular in the deterministic numerical solution of the Boltzmann equation (see \cite{DimarcoPareschi15} for an introduction and \cite{wu2013deterministic,gamba2017fast,cai2018entropic,jaiswal2019fast,hu2020new} for some recent developments). Their main features are the spectral accuracy for smooth solutions and the possibility to use fast algorithms that mitigate the curse of dimensionality \cite{PR00, MP06}. On the other hand, being based on a Fourier-Galerkin approach, they typically lead to the loss of most physical properties of the Boltzmann equation, namely positivity, conservations, entropy dissipation and, as a consequence, long time behavior \cite{DimarcoPareschi15}. 
Typically, approaches based on smoothing to recover positivity or renormalization to force moments conservation lead to the loss of spectral accuracy and do not guarantee the correct long time behavior \cite{PR00b}. A way to overcome some of these drawbacks without sacrificing spectral accuracy has been proposed recently in \cite{FilbetPareschiRey:2015,PareschiRey:2016}, where steady state preserving spectral methods have been constructed. Despite their ability to capture the steady state with arbitrary accuracy, stability and convergence of the methods have never been studied in details. In this paper, we fill that gap by showing that the general stability arguments developed for the classical spectral methods by Filbet and Mouhot in \cite{FM11} can be extended to their corresponding equilibrium preserving approaches.

\subsection{The Boltzmann equation}
The homogeneous Boltzmann equation describes the behavior of a dilute gas of
particles when the only interactions taken into account are binary
elastic collisions. It reads for  $v \in \R^{d}$, $d \leq 3$
  \begin{equation*}
  \derpar{f}{t} = \Q(f,f)
  \end{equation*}
where $f(t,v)$ is the time-dependent particle distribution
function in the phase space. The Boltzmann collision operator $\Q$
is a quadratic operator local in $t$:
  \begin{equation}\label{eq:mp:Q}
  \Q (f,f)(v) = \int_{\R^{d} \times\ens{S}^{d-1}} B(|v-v_*|,\cos \theta) \,
  \left( f'_* f' - f_* f \right) \, dv_* \, d\sigma.
  \end{equation}
In~\eqref{eq:mp:Q} we used the shorthand $f = f(v)$, $f_* = f(v_*)$,
$f ^{'} = f(v')$, $f_* ^{'} = f(v_* ^{'})$. The velocities of the
colliding pairs $(v,v_*)$ and $(v',v'_*)$ can be parametrized as
  \begin{equation*}
  v' = \frac{v+v_*}{2} + \frac{|v-v_*|}{2} \sigma, \qquad
  v'_* = \frac{v+v^*}{2} - \frac{|v-v_*|}{2} \sigma.
  \end{equation*}
The collision kernel $B$ is a non-negative function which by
physical arguments of invariance only depends on $|v-v_*|$ and
$\cos \theta = {\hat g} \cdot \sigma$ (where ${\hat g} =
(v-v_*)/|v-v_*|$).
It characterizes the
details of the binary interactions, and and will take in this note the following variable hard spheres form
\begin{equation}
\label{defVHSKernel}
B(|v-v_*|,\cos\theta)=|v-v_*|^\lambda b(\cos(\theta),
\end{equation}
where $\lambda \in [0,1]$ and $b \in L^1(\mathbb{S}^{d-1})$.

Boltzmann's collision operator has the fundamental properties of
conserving mass, momentum and energy
  \begin{equation*}
  \int_{\R^d} \Q(f,f)\phi(v) \,dv = 0, \quad
  \phi(v)=1,v_1, \ldots, v_d,|v|^2
 \end{equation*}
and satisfies the well-known Boltzmann's $H$ theorem
  \begin{equation*} 
  \frac{d}{dt} \int_{\R^d} f \log f \, dv = \int_{\R^d} \Q(f,f) \log(f) \, dv \leq 0.
  \end{equation*}
Boltzmann's $H$ theorem implies that any equilibrium
distribution function has the form of a locally Maxwellian distribution
  \begin{equation*}
  M(\rho,u,T)(v)=\frac{\rho}{(2\pi T)^{d/2}}
  \exp \left\{ - \frac{\vert u - v \vert^2} {2T} \right\},
  \end{equation*}
where $\rho,\,u,\,T$ are the density, mean velocity
and temperature of the gas
  \begin{equation*}
  \rho = \int_{\R^d}f(v) \, dv, \quad
  u = \frac{1}{\rho}\int_{\R^d} v f(v) \, dv, \quad
  T = \frac{1}{d\rho} \int_{\R^d}\vert u - v \vert^2f(v) \, dv.
  \end{equation*}
For further details on the physical background and derivation of
the Boltzmann equation we refer to~\cite{CIP:94}.

\subsection{The Fourier spectral method for the Boltzmann collision operator}
To simplify notations we shall derive the spectral method in the classical Fourier-Galerkin setting introduced in \cite{PR00}, similarly it can be extended to the representation used in \cite{MP06} for the derivation of fast algorithms. Thus, we perform the usual periodization in a bounded domain of the operators $\LL$ and $\Q$ and denote by $\LL^R$ and $\Q^R$ the operators with truncation on the relative velocity on $\Ball_0(2R)$. 

Let us first set up the mathematical framework of our analysis. For any
$t \geq 0$, $f_N(t,v)$ is a trigonometric polynomial of degree $N$ in
$v$, i.e. $f_N(t) \in \poly^N$ where
\[
\mathbb P^N = \Span\left\{e^{ik\cdot v}\,|\, -N \leq k_j \leq N,\, j=1,\ldots,d
\right\}.
\]
Moreover, let $\proj_N : L^2([-\pi,\pi]^3) \rightarrow \poly^N$ be the
orthogonal projection upon $\poly^N$ in the inner product of
$L^2([-\pi,\pi]^3)$ 
\[
<f-\proj_N f,\phi>=0,\qquad \forall\,\, \phi\,\in\,\poly^N.
\]
We denote the $L^2$-norm by
\[
||f||_2 = (< f, f>)^{1/2}.
\]
With this definition $\proj_N f=f_N$, where $f_N$ is the truncated
Fourier series of $f$ given by 
\begin{equation*}
f_N(v) = \sum_{k=-N}^N \f_k e^{i k \cdot v}, \qquad \f_k = \frac{1}{(2\pi)^d}\int_{[-\pi,\pi]^d} f(v)
e^{-i k \cdot v }\,dv.
\end{equation*}
Using multi-index notations, we can compute $Q_N^R(f_N,f_N) := \proj_N Q^R(f_N,f_N)$ as
\begin{equation}
Q_N^R(f_N,f_N) = \sum_{k=-N}^N \left(\sum_{\substack{l,m=-N \\l+m=k}}^N \f_l\,\f_m
\bb(l,m)\right)e^{i k \cdot v},\quad k=-N,\ldots,N,
\label{eq:CF1}
\end{equation}
where the {Boltzmann kernel modes} $\bb(l,m)=\B(l,m)-\B(m,m)$ are given by
\begin{equation}
\B(l,m) = \int_{\Ball_0(2\lambda\pi)}\int_{\mathbb{S}^{d-1}} 
B(\cos \theta,|q|) e^{-i(l\cdot q^++m\cdot q^-)}\,d\omega\,dq. \label{eq:KM}
\end{equation}
In this last identity, $q=v-v_*$  and $q^+, q^-$ are given by
  \[
    q^+ = \frac12\left (q+|q|\omega\right ), \quad q^- = \frac12\left (q-|q|\omega\right ).
  \] 

\section{The equilibrium preserving spectral method}
	A major problem associated with deterministic methods is that the velocity space is approximated by a finite region. On the other hand, even starting from a compactly supported function in velocity, by the action of the collision term the solution becomes immediately positive in the whole velocity space. In particular, the local Maxwellian equilibrium states are characterized by exponential functions defined on the whole velocity space.
Moreover, these gaussian functions are obviously not contained in the space of trigonometric polynomials $\mathbb{P}^N$.
The main idea in the derivation of equilibrium preserving spectral methods is to perform a change of variables such that the new equilibrium belongs to the approximation space\cite{FilbetPareschiRey:2015,PareschiRey:2016}. In our case, we will see that $0$ is a good candidate.

Let us start with the decomposition 
\be
f=M+g,
\label{eqs5:micmac}
\ee  
with $M$ the local Maxwellian equilibrium and $g$ such that $\int_{\R^d} g\,\Phi\,dv=0$. When
inserted into the Boltzmann collision operator, the decomposition \eqref{eqs5:micmac} gives
\be
Q(f,f)=\LL(M,g)+Q(g,g),
\label{eq:decom}
\ee
where $\LL(M,g)=Q(g,M)+Q(M,g)$ is a linear operator and we used the fact that 
\be
Q(M,M)=0.
\label{eq:steady}
\ee
There are two major features in the decomposition (\ref{eq:decom}): 
\begin{enumerate}
\item it embeds the identity (\ref{eq:steady}); 
\item the steady state of (\ref{eq:decom}) is given by $g=0$.
\end{enumerate} 
{This type of micro-macro decomposition has been used \emph{e.g.} in \cite{JinShi:2009, LemMieu:2008, BcHR:2019} to develop numerical methods which preserves asymptotic behaviors of some kinetic models.}

To illustrate the equilibrium preserving spectral method, let us consider now the space homogeneous Boltzmann equation  that we rewrite using the micro-macro decomposition as
\be
\left\{
\begin{aligned}
\frac{\partial g}{\partial t} &= \LL(M,g)+Q(g,g),\\
f&=M+g.
\end{aligned}
\label{eq:micmac}
\right.
\ee
Denoting  $M_N=\proj_N M$ and $g_N=\proj_N g$,  we then write the Fourier-Galerkin approximation of the micro-macro equation \eqref{eq:micmac} 
\be
\left\{
\begin{aligned}
\frac{\partial g_N}{\partial t}&=
\LL_N(M_N,g_N)+Q_N(g_N,g_N),\\
f_N&=M_N+g_N,
\end{aligned}
\label{eq:specc}
\right.
\ee
where
$\LL_N(M_N,g_N)=\proj_N \LL(M_N,g_N)$ and  $Q_N(g_N,g_N)=\proj_N Q^R(g_N,g_N)$. 

It is immediate to show that 
\begin{proposition}
The function $g_N \equiv 0$ is an admissible local equilibrium of the scheme (\ref{eq:specc}) and therefore $f_N=M_N$ is a local equilibrium state. 
\end{proposition}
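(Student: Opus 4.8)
The plan is to verify directly that the constant-in-time function $g_N(t)\equiv 0$ solves the evolution equation in \eqref{eq:specc}, so that it is a stationary point (hence a local equilibrium) of the scheme. The only thing that needs checking is that the right-hand side of the first line of \eqref{eq:specc} vanishes identically when $g_N=0$; everything then reduces to exploiting the bilinear structure of the collision operator together with the linearity of the projection $\proj_N$.

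First I would treat the quadratic term. Since $Q^R(\cdot,\cdot)$ is bilinear, we have $Q^R(0,0)=0$, and because $\proj_N$ is a linear operator this gives $Q_N(0,0)=\proj_N Q^R(0,0)=\proj_N 0=0$. Next I would treat the linearized term. Recalling that $\LL(M,\cdot)$ is linear in $g$ (it is the sum of the two expressions $Q(g,M)$ and $Q(M,g)$, each linear in the slot occupied by $g$), setting $g_N=0$ yields $\LL(M_N,0)=Q(0,M_N)+Q(M_N,0)=0$, whence $\LL_N(M_N,0)=\proj_N\LL(M_N,0)=0$. Combining the two observations, the right-hand side is identically zero at $g_N=0$, so the constant function $g_N\equiv 0$ satisfies $\partial_t g_N=0$ and is preserved by the flow. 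Admissibility is immediate, since $0\in\poly^N$ and $M_N=\proj_N M\in\poly^N$. Substituting $g_N=0$ into $f_N=M_N+g_N$ then gives $f_N=M_N$, the claimed local equilibrium state.

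I do not expect a genuine analytic obstacle here: the statement is purely structural and rests only on the bilinearity of $Q^R$ and the linearity of $\proj_N$. The conceptual content—and the reason the proposition is worth isolating—lies in the contrast with the standard Fourier–Galerkin method, where $Q_N(M_N,M_N)=\proj_N Q^R(M_N,M_N)$ does \emph{not} vanish in general: the continuous identity $Q(M,M)=0$ from \eqref{eq:steady} is destroyed by the truncation on $\Ball_0(2R)$ and by the projection onto $\poly^N$, so that $M_N$ fails to be a discrete steady state in that setting. The micro-macro reformulation \eqref{eq:micmac} relocates the equilibrium to $g_N=0$, which by construction belongs to the approximation space and is therefore reproduced exactly by the scheme.
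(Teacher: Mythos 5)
Your proof is correct and is exactly the argument the paper has in mind: the paper dismisses this proposition as ``immediate,'' and the immediate argument is precisely yours---bilinearity of $Q^R$ and linearity of $\LL(M_N,\cdot)$ and $\proj_N$ force the right-hand side of \eqref{eq:specc} to vanish at $g_N=0$, so $g_N\equiv 0$ is stationary and $f_N=M_N$. Your closing remark about why this fails for the standard Fourier--Galerkin method (where $\proj_N Q^R(M_N,M_N)\neq 0$ in general) correctly identifies the motivation behind the micro-macro construction.
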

Moreover it was proved in \cite{FilbetPareschiRey:2015} that the equilibrium preserving spectral method has the same spectral consistency property that the underlying spectral method:
\begin{theorem}
Let $f \in H_p^r([-\pi,\pi]^3)$, $r\geq 1$ then there exists $C>0$ such that
\begin{eqnarray}
\nonumber
\|Q(f,f)-\LL_N^\lambda(M_N,g_N)-Q_N(g_N,g_N)\|_{L^2} &\leq& \frac{C}{N^r} \left(\|f\|_{H^r} + \|M\|_{H^r}\right.\\[-.25cm]
\\[-.25cm]
\nonumber
&&\left. +
\|Q(f_N,f_N)\|_{H^r}+\|Q(M_N,M_N)\|_{H^r}\right).
\end{eqnarray}
\label{th:news}
\end{theorem}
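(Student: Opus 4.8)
The plan is to avoid estimating the linear part $\LL$ and the quadratic part $Q$ of the micro--macro splitting separately, and instead to reduce the whole consistency error to \emph{two} applications of the spectral consistency estimate already available for the classical Fourier--Galerkin method \cite{PR00,FM11}. A brute force bound of the two pieces $\LL(M,g)-\proj_N\LL(M_N,g_N)$ and $Q(g,g)-\proj_N Q^R(g_N,g_N)$ would, after expanding the bilinear operators, produce a large number of cross terms mixing $M$, $g$, their projections, and the truncated and untruncated kernels. The observation that removes all of this is an exact bilinear identity together with the steady state relation $Q(M,M)=0$.

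First I would record, using $f_N=M_N+g_N$ and the bilinearity of the truncated collision operator discretized with the same kernel in the linear and quadratic parts, the identity
\be
\LL^R(M_N,g_N)+Q^R(g_N,g_N)=Q^R(f_N,f_N)-Q^R(M_N,M_N),
\label{eq:bilid}
\ee
where $\LL^R(M_N,g_N)=Q^R(g_N,M_N)+Q^R(M_N,g_N)$. Applying $\proj_N$ to \eqref{eq:bilid} and inserting it into the numerical operator, and then using $Q(M,M)=0$, I would rewrite the consistency error as a difference of two \emph{classical} spectral errors,
\be
\begin{aligned}
&Q(f,f)-\proj_N\LL^R(M_N,g_N)-\proj_N Q^R(g_N,g_N)\\
&\qquad=\big(Q(f,f)-\proj_N Q^R(f_N,f_N)\big)-\big(Q(M,M)-\proj_N Q^R(M_N,M_N)\big).
\end{aligned}
\label{eq:split}
\ee
Each parenthesis in \eqref{eq:split} is exactly the consistency error of the underlying spectral scheme, evaluated at the function $f$ and at the local Maxwellian $M$, respectively.

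Then I would invoke the known consistency estimate for the classical method \cite{PR00,FM11}, namely that for any $h\in H_p^r$ one has $\|Q(h,h)-\proj_N Q^R(h_N,h_N)\|_{L^2}\leq C N^{-r}\big(\|h\|_{H^r}+\|Q(h_N,h_N)\|_{H^r}\big)$, and apply it twice, once with $h=f$ and once with $h=M$. A triangle inequality on \eqref{eq:split} then delivers precisely the four terms of the claimed bound: the pair $\|f\|_{H^r},\,\|Q(f_N,f_N)\|_{H^r}$ coming from the $f$ error and the pair $\|M\|_{H^r},\,\|Q(M_N,M_N)\|_{H^r}$ from the $M$ error. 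It is worth noting that, since $Q(M,M)=0$, the second estimate actually controls $\|\proj_N Q^R(M_N,M_N)\|_{L^2}$, so the reduction also exhibits transparently why the Maxwellian contribution is spectrally small, which is the mechanism underlying the equilibrium preserving property.

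The hard part will not be the algebraic splitting but checking that the hypotheses of the classical estimate genuinely apply to the Maxwellian term. I would need the periodized local Maxwellian to lie in $H_p^r([-\pi,\pi]^3)$ with a controlled norm, which holds because $M$ is smooth and rapidly decaying so that its periodization is as regular as required. More delicately, identity \eqref{eq:bilid} holds \emph{exactly} only if the linear operator in the scheme is discretized with the same truncated kernel $Q^R$ as the quadratic one; if the two truncations differ, the bilinear cancellation leaves a residual term that must be shown to be spectrally negligible before the reduction goes through. Finally, I would record the $L^2$ boundedness of $Q^R$, which is used implicitly inside the classical estimate when passing from $\|h-h_N\|_{L^2}$ to the bound on $Q(h,h)-Q(h_N,h_N)$.
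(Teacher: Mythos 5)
Your proposal is correct and takes essentially the same route as the paper: the paper does not reproduce the proof inline (it defers to \cite{FilbetPareschiRey:2015}), but the bilinear identity at the heart of your argument, $\P_N \LL(M_N,g_N)+\P_N Q^R(g_N,g_N)=\P_N Q^R(f_N,f_N)-\P_N Q^R(M_N,M_N)$, is exactly the rewriting the paper itself uses in Section 3 (equation \eqref{eq:spece}), and the cited proof is precisely this reduction, via $Q(M,M)=0$, to two applications of the classical spectral consistency estimate of \cite{PR00} (once for $f$, once for $M$), which yields the four terms of the stated bound.
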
  

\section{Stability of the equilibrium preserving spectral method}

Consider the equilibrium preserving method written in the form
\be
\left\{
\begin{aligned}
\frac{\partial f_N}{\partial t } &= \P_N Q(f_N,f_N)-\P_N Q(M_N,M_N) =\P_N Q(f_N+M_N,f_N-M_N)\\
f_N(v,0)&=\P_N f_0(v)
\label{eq:spece}
\end{aligned} \right.
\ee
where $M_N = \P_N M$ does not depend on time. Our main result is summarized below. 

	\begin{theorem}
		\label{thm:stabilityEPBoltzmann}
		Let us consider a nonnegative initial condition $f_0 \in H^k([\pi,\pi]^d)$ for $k > d/2$. There exists $N_0 \in \mathbb N$  such that for all $N \geq N_0$:
		\begin{enumerate}
			\item There is a unique global smooth solution $f_N$ to the problem \eqref{eq:spece};
			\item for any $r < k$, there exists $C_r >0$ such that 
			\[ \|f_N^c(t,\cdot)\|_{H^r} \leq C_r; \]
			\item $f_N$ converges to a solution $f$ of equation \eqref{eq:mp:Q} with spectral accuracy, uniformly in time;
			\item there exists $C, \lambda > 0$ depending on $f_0$ such that 
				\[ \| g_N(t,\cdot)\|_{L^1} \leq C e^{-\lambda t}, \quad \text{or equivalently,} \quad \| f_N(t,\cdot) - M_N\|_{L^1} \leq C e^{-\lambda t} .\]
		\end{enumerate}
	\end{theorem}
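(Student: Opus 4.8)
The plan is to adapt the Filbet--Mouhot perturbation strategy from \cite{FM11} to the reformulated scheme \eqref{eq:spece}, treating $M_N$ as a fixed (time-independent) background and viewing the dynamics as a perturbation around the numerical Maxwellian. The crucial algebraic observation is that the right-hand side factors as $\P_N Q(f_N+M_N, f_N-M_N)$, so writing $f_N = M_N + g_N$ immediately yields $g_N = f_N - M_N$ as the natural perturbation variable, with initial perturbation $g_N(0) = \P_N f_0 - M_N$ of controlled size. The four assertions are then naturally ordered: local-in-time existence and uniqueness, then uniform-in-time Sobolev bounds, then spectral convergence to the continuous solution, and finally exponential decay of the perturbation.

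First I would establish \emph{local} existence and uniqueness for \eqref{eq:spece}. Since $f_N$ lives in the finite-dimensional space $\poly^N$ and the map $f_N \mapsto \P_N Q(f_N + M_N, f_N - M_N)$ is a bounded bilinear-plus-linear operator on $\poly^N$ (using the $L^2$-boundedness of the truncated kernel modes $\bb(l,m)$ from \eqref{eq:KM}, which holds for the variable hard sphere kernel \eqref{defVHSKernel} with $\lambda \in [0,1]$), the vector field is locally Lipschitz, so Cauchy--Lipschitz gives a unique maximal smooth solution. The content is then to show the solution is \emph{global} and uniformly bounded, which is assertion (2). Here I would follow Filbet--Mouhot: derive an energy estimate in $H^r$ for $r<k$ by testing the equation against $f_N$ (equivalently $g_N$) in the $H^r$ inner product, using the continuity estimate $\|\P_N Q(f_N, g_N)\|_{H^r} \lesssim \|f_N\|_{H^r}\|g_N\|_{H^r}$ valid for $r>d/2$ (so that $H^r$ is an algebra). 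The key is to exploit the dissipative structure of the linearized operator $\LL$ around the Maxwellian: the spectral gap of the linearized Boltzmann operator must dominate the quadratic perturbation $Q(g_N,g_N)$ once $\|g_N\|$ is small, yielding an a priori bound that prevents blow-up and simultaneously forces $g_N$ to stay small for all time, provided $N\geq N_0$ is large enough that $M_N$ is sufficiently close to $M$ in the relevant norm.

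For assertion (3), I would combine the uniform bound from (2) with the spectral consistency estimate of Theorem~\ref{th:news}: the difference $f_N - f$ solves an equation whose source term is the consistency error, which is $O(N^{-r})$ by Theorem~\ref{th:news}, and a Gronwall argument closed uniformly in time (made possible precisely by the exponential stability established below, rather than an exponentially growing Gronwall factor) transfers the $N^{-r}$ spectral rate to the error $\|f_N - f\|$ uniformly in $t$. For assertion (4), the exponential $L^1$ decay of $g_N$, I would linearize: once $\|g_N\|$ is small, the dynamics is governed to leading order by $\partial_t g_N = \LL_N(M_N, g_N)$, and the spectral gap of $\LL$ (the linearized collision operator has a negative eigenvalue gap $-\lambda$ on the orthogonal complement of the collision invariants, on which $g_N$ lives by construction since $\int g_N \Phi\,dv = 0$) gives $\|g_N(t)\|_{L^2}\leq Ce^{-\lambda t}$; the quadratic term is absorbed as a higher-order correction via a bootstrap, and the $L^2$ decay upgrades to $L^1$ on the bounded domain by Cauchy--Schwarz.

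I expect the main obstacle to be assertion (2), specifically closing the uniform-in-time energy estimate. The difficulty is twofold. First, one must show that the spectral gap of the \emph{linearized} operator $\LL_N(M_N,\cdot)$ survives both the truncation on $\Ball_0(2R)$ and the projection $\P_N$, with a gap constant uniform in $N$ for $N \geq N_0$; this requires controlling the perturbation $M_N - M$ and showing the discretized linearized operator inherits the coercivity of the continuous one on the appropriate invariant subspace. Second, the nonlinear term $Q(g_N,g_N)$ must be genuinely dominated by the linear dissipation, which is only true in a small ball around $g_N = 0$, so the argument is a continuation/bootstrap that must be seeded by a smallness assumption that propagates — the delicate point is verifying that the initial perturbation $g_N(0) = \P_N f_0 - M_N$ can be made small (or that the attraction basin is entered in finite time), and that the smallness is preserved by the flow uniformly in $N$. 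The interplay between the $N$-dependent threshold $N_0$ and the smallness radius is where the proof requires the most care.
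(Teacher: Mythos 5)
Your proposal starts from the same reformulation as the paper --- writing \eqref{eq:spece} as a perturbation around the Boltzmann dynamics in the spirit of Filbet--Mouhot --- but then diverges in a way that opens a genuine gap. The paper's proof never linearizes around the Maxwellian and never invokes a spectral gap of $\LL_N(M_N,\cdot)$. Instead it defines the perturbation
\[
P_N(f_N,M_N)= \P_N Q(f_N+M_N,f_N-M_N)- Q(f_N,f_N),
\]
so that \eqref{eq:spece} becomes $\partial_t f_N = Q(f_N,f_N)+P_N(f_N,M_N)$, i.e.\ the \emph{true} Boltzmann equation plus a source, and then verifies the three hypotheses of Theorem 3.1 of \cite{FM11}: \textbf{($H_1$)} $P_N$ preserves mass, \textbf{($H_2$)} $P_N$ is smooth (via the bilinear estimates $\|Q(f,g)\|_{H^r}\le C\|f\|_{L^1}\|g\|_{H^r}$ of \cite{FM11,PR00}, applied after the key cancellation $Q(f,f)=Q(f+M,f-M)$ coming from $Q(M,M)=0$), and \textbf{($H_3$)} $P_N$ is spectrally small, of order $N^{-(k-r)}$. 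Together with an a priori Gronwall bound in $L^2_p$ and $H^r$, all four assertions then follow by citing \cite{FM11} as a black box. The stability mechanism there is the relaxation of the \emph{continuous} Boltzmann flow (entropy--entropy production estimates), transferred to the perturbed equation by the smallness of $P_N$.

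The gap in your plan is exactly the point you flag as "the main obstacle" but do not resolve: your assertions (2) and (4) rest on the spectral gap of the linearized operator dominating $Q(g_N,g_N)$, which is a near-equilibrium argument, while the theorem assumes only $f_0\ge 0$, $f_0\in H^k$ --- so $g_N(0)=\P_N f_0 - M_N$ is in general \emph{not} small, and no choice of $N_0$ makes it small. To enter the basin of attraction you would need to prove that the discrete flow relaxes toward $M_N$ from large data, but the standard tool for that (the $H$-theorem) is unavailable at the discrete level: the spectral approximation $f_N$ need not be positive and the projected equation dissipates no entropy. This is precisely the obstruction that forces the paper (following \cite{FM11}) to work with a perturbation of the continuous equation, whose large-data relaxation is known, rather than with a linearization of the discrete one. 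Your outline is also internally circular as stated: the uniform-in-time Gronwall for assertion (3) is "made possible by the exponential stability established below," while that exponential stability itself presupposes the uniform bounds and smallness of (2). Unless you replace the spectral-gap/bootstrap core with an argument that handles data far from equilibrium --- which is what Theorem 3.1 of \cite{FM11} supplies --- the proof cannot be closed.
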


\begin{proof}
We want to apply the Theorem 3.1 of \cite{FM11}. We rewrite \eqref{eq:spece} as a perturbed Boltzmann equation
\be
\left \{\begin{aligned}
\frac{\partial f_N}{\partial t } &= Q(f_N,f_N)+P_N(f_N,M_N)\\
f_N(v,0)&=\P_N f_0(v)
\label{eq:spece2}
\end{aligned} \right.
\ee
with the perturbation $P_N$ defined as
\be
P_N(f_N,M_N)= \P_N Q(f_N+M_N,f_N-M_N)- Q(f_N,f_N).
\label{eq:pert}
\ee
Our goal is to prove that this perturbation fits the framework developed in \cite{FM11} to study the stability and large time behavior of spectral methods, namely that \textbf{($H_1$)} it \emph{preserves the mass}; \textbf{($H_2$)} is \emph{smooth}; \textbf{($H_3$)} is \emph{spectrally small}. Let us first notice the following identities:

\[
\int_{\D_L}P_N(f_N,M_N)\,dv = 0,\qquad P_N (M_N,M_N)=-Q(M_N,M_N).
\]
In particular, the only equilibrium distributions to \eqref{eq:spece} or \eqref{eq:spece2} are the corresponding projections of the Maxwellian equilibrium distribution $M_N$. Moreover, this also proves mass preservation, namely hypothesis \textbf{($H_1$)}.

For a nonnegative function $f\in H^k$, we have for $r\in[0,k]$ and $M=M[f]$
\[
\| P_N(f,M) \|_{H^r} \leq \|\P_N Q(f+M,f-M)\|_{H^r}+\|Q(f,f)\|_{H^r}.
\]
Now since $Q(M,M)=0$, one gets from the regularity Lemma 4.1 of \cite{FM11} and Lemma 5.2 of \cite{PR00}
\[
\|Q(f,f)\|^2_{H^r} =\|Q(f+M,f-M)\|^2_{H^r} \leq C \|f+M\|_{L^1}^2 \|f-M\|_{H^r}^2 
\]
and
\[
\|\P_N Q(f+M,f-M)\|^2_{H^r} \leq \|Q(f+M,f-M)\|^2_{H^r}\leq C \|f+M\|_{L^1}\|f-M\|^2_{H^r} 
\]
so that get the smoothness hypothesis \textbf{($H_2$)}
\be
\| P_N(f,M) \|_{H^r} \leq C \|f+M\|_{L^1}^2 \|f-M\|_{H^r}^2.
\ee
Similarly, we also have the spectral smallness \textbf{($H_3$)}, namely:
\be
\| P_N(f,M) \|_{H^r} \leq C \|f+M\|_{L^1}^2 \frac{\|f-M\|_{H^k}^2}{N^{k-r}}.
\ee
Finally, for the global stability, note that from
\[
\|Q(f+M,f-M)\|\leq C \|f+M\|_{L^1} \|f-M\|_{L^2_p}
\]
we get
\beas
\frac12 \frac{d}{dt}\|f-M\|^2_{L^2_p} &\leq& \|Q(f+M,f-M)+P_N(f,M)\|_{L^2_p} \|f-M\|_{L^2_p} \\
&\leq& \|f+M\|_{L^1} \|f-M\|^2_{L^2_p}
\eeas
Now assume $\| f+M\|_{L^1} \leq K$ we get by Gronwall inequality that there exist a constant $C_0(K)$ s.t.
\[
\|f-M\|^2_{L^2_p} \leq C_0(K).
\]
Similarly we obtain that there exist a constant $C_r(K)$ such that 
\[
\|f-M\|^2_{H^r} \leq C_r(K).
\]
Gathering this and hypotheses \textbf{(${H_1}-{H_2}-{H_3}$)}, one obtains using the Theorem 3.1 of \cite{FM11} our result.

\end{proof}
\paragraph{Acknowledgment.}
TR was partially funded by Labex CEMPI (ANR-11-LABX-0007-01) and ANR Project MoHyCon (ANR-17-CE40-0027-01). LP acknowledge the partial support of MIUR-PRIN Project 2017, No. 2017KKJP4X Innovative numerical methods for evolutionary partial differential equations and applications. 

\bibliographystyle{acm}
\bibliography{biblioPR}

\begin{thebibliography}{10}

\bibitem{BcHR:2019}
{\scshape Bessemoulin-Chatard, M., Herda, M., and Rey, T.}
\newblock Hypocoercivity and diffusion limit of a finite volume scheme for
  linear kinetic equations,.
\newblock {\em Math. Comp.\/} (2020).

\bibitem{cai2018entropic}
{\scshape Cai, Z., Fan, Y., and Ying, L.}
\newblock {An entropic Fourier method for the Boltzmann equation}.
\newblock {\em SIAM J. Sci. Comput. 40}, 5 (2018), A2858--A2882.

\bibitem{CIP:94}
{\scshape Cercignani, C., Illner, R., and Pulvirenti, M.}
\newblock {\em The Mathematical Theory of Dilute Gases}, vol.~106 of {\em
  Applied Mathematical Sciences}.
\newblock Springer-Verlag, New York, 1994.

\bibitem{DimarcoPareschi15}
{\scshape Dimarco, G., and Pareschi, L.}
\newblock Numerical methods for kinetic equations.
\newblock {\em Acta Num. 23\/} (2014), 369--520.

\bibitem{FM11}
{\scshape Filbet, F., and Mouhot, C.}
\newblock {{A}nalysis of Spectral Methods for the Homogeneous {B}oltzmann
  Equation}.
\newblock {\em Trans. Amer. Math. Soc. 363\/} (2011), 1947--1980.

\bibitem{FilbetPareschiRey:2015}
{\scshape Filbet, F., Pareschi, L., and Rey, T.}
\newblock {On steady-state preserving spectral methods for homogeneous
  Boltzmann equations}.
\newblock {\em C. R. Acad. Sci. Paris, Ser. I 353}, 4 (2015), 309--314.

\bibitem{gamba2017fast}
{\scshape Gamba, I.~M., Haack, J.~R., Hauck, C.~D., and Hu, J.}
\newblock {A fast spectral method for the Boltzmann collision operator with
  general collision kernels}.
\newblock {\em SIAM J. Sci. Comput. 39}, 4 (2017), B658--B674.

\bibitem{hu2020new}
{\scshape Hu, J., Qi, K., and Yang, T.}
\newblock {A new stability and convergence proof of the Fourier-Galerkin
  spectral method for the spatially homogeneous Boltzmann equation}.
\newblock {\em arXiv preprint arXiv:2007.05184\/} (2020).

\bibitem{jaiswal2019fast}
{\scshape Jaiswal, S., Hu, J., and Alexeenko, A.~A.}
\newblock {Fast deterministic solution of the full Boltzmann equation on
  graphics processing units}.
\newblock In {\em AIP Conference Proceedings\/} (2019), vol.~2132, AIP
  Publishing LLC, p.~060001.

\bibitem{JinShi:2009}
{\scshape Jin, S., and Shi, Y.}
\newblock A micro-macro decomposition-based asymptotic-preserving scheme for
  the multispecies {B}oltzmann equation.
\newblock {\em SIAM J. Sci. Comput. 31}, 6 (2009/10), 4580--4606.

\bibitem{LemMieu:2008}
{\scshape Lemou, M., and Mieussens, L.}
\newblock A new asymptotic preserving scheme based on micro-macro formulation
  for linear kinetic equations in the diffusion limit.
\newblock {\em SIAM J. Sci. Comput. 31}, 10 (2008), 334--368.

\bibitem{MP06}
{\scshape Mouhot, C., and Pareschi, L.}
\newblock Fast algorithms for computing the {B}oltzmann collision operator.
\newblock {\em Math. Comp. 75}, 256 (2006), 1833--1852 (electronic).

\bibitem{PareschiRey:2016}
{\scshape Pareschi, L., and Rey, T.}
\newblock Residual equilibrium schemes for time dependent partial differential
  equations.
\newblock {\em Comput. Fluids 156\/} (2017), 329--342.
\newblock arXiv preprint 1602:02711.

\bibitem{PR00}
{\scshape Pareschi, L., and Russo, G.}
\newblock {Numerical Solution of the Boltzmann Equation I : Spectrally Accurate
  Approximation of the Collision Operator}.
\newblock {\em SIAM J. Numer. Anal. 37}, 4 (2000), 1217--1245.

\bibitem{PR00b}
{\scshape Pareschi, L., and Russo, G.}
\newblock On the stability of spectral methods for the homogeneous {B}oltzmann
  equation.
\newblock {\em Transport Theory Statist. Phys. 29}, 3-5 (2000), 431--447.

\bibitem{wu2013deterministic}
{\scshape Wu, L., White, C., Scanlon, T.~J., Reese, J.~M., and Zhang, Y.}
\newblock {Deterministic numerical solutions of the Boltzmann equation using
  the fast spectral method}.
\newblock {\em J. Comput. Phys. 250\/} (2013), 27--52.

\end{thebibliography}

\end{document}